\author{Nathan Grigg\thanks{Nathan Grigg and Nathan Manwaring are undergraduates at Brigham Young University. This research was funded in part by a grant from the BYU Office of Research and Creative Activities.} \\ Nathan Manwaring}
\title{An Elementary Proof of the Fundamental Theorem of Tropical Algebra}
\newtheorem*{fta}{Fundamental Theorem of Tropical Algebra}
\newtheorem{thm}{Theorem}[section]
\newtheorem{lem}[thm]{Lemma}
\theoremstyle{definition}
\newtheorem{defn}[thm]{Definition}
\newtheorem*{nte}{Note}
\newcommand{\Q}{\mathcal Q}
\newcommand{\+}{\oplus}
\newcommand{\half}{\frac{1}{2}}
\newcommand{\x}{}
\newcommand{\y}{}
\begin{document}
\maketitle
\begin{abstract} In this paper we give an elementary proof of the Fundamental Theorem of Algebra for polynomials over the rational tropical semi-ring. We prove that, tropically, the rational numbers are algebraically closed. We provide a simple algorithm for factoring tropical polynomials of a single variable. A central idea is the concept of \emph{least-coefficient polynomials} as representatives for classes of functionally equivalent polynomials. This idea has importance far beyond the proof of the Fundamental Theorem of Tropical Algebra.
\end{abstract}

\section{Introduction}
In this paper we will consider the tropical semi-ring, as discussed by Richter-Gebert, Sturmfels, and Theobald in~\cite{richter:first} and by Speyer and Sturmfels in~\cite{speyer}. Our goal is to give an elementary proof of the Fundamental Theorem of Algebra as it applies to the tropical semi-ring.

Although the authors of some papers refer to this theorem, they do not do more than confirm it as true or dismiss it as trivial. Nevertheless, our proof of this theorem is key to understanding vital components of the tropical algebraic structure. We note that one version of the proof has been published by Izhakian in~\cite{izhakian}, but Izhakian gives his proof over an ``extended'' tropical semi-ring that is substantially different from the standard tropical semi-ring that most others study. Hence, there is merit in discussing this elementary proof and the underlying ideas it addresses.

\begin{fta}
Every tropical polynomial in one variable with rational coefficients can be factored uniquely as a product of linear tropical polynomials with rational coefficients, up to functional equivalence.
\end{fta}

It is important to note that this theorem only applies up to functional equivalence. To illustrate this, note that we would factor $x^2\+4x\+6$ as $(x\+3)^2$. As functions, these are the same---for any $x$ they are equal. Nevertheless, the second expression expands to the polynomial $x^2\+3x\+6$, which is not the same polynomial as the first. For this reason, together with the fact that geometric properties of a polynomial depend only on its function, we will regard two polynomials as equivalent if they define the same function. For more information, see Section~\ref{sec:equality}.

Since we will be dealing with equivalence classes of polynomials, it is useful to have a representative for each functional equivalence class. In Section~\ref{sec:lcp}, we discuss one possible, very useful representative, called a \emph{least-coefficient polynomial}. We prove that every tropical polynomial is functionally equivalent to a least-coefficient polynomial and that each least-coefficient polynomial can be easily factored using the formula given in Section~\ref{sec:fta}.

\begin{defn}
The \emph{rational tropical semi-ring} is $\Q = (\mathbb Q\cup \infty,\+,\odot)$\,, where
\begin{eqnarray*} \label{eqn:addition and multiplication}
 a \+ b &:=& \min(a,b)\,,\ \textup{and} \\
 a \odot b&:=&a+b\,.
\end{eqnarray*}
\end{defn}

\noindent We note that the additive identity of $\Q$ is $\infty$ and the multiplicative identity is $0$\,.
Elements of $\Q$ do not have additive inverses, but the multiplicative inverse of $a$ is the classical \emph{negative} $a$\,. The commutative, associative, and distributive properties hold.

\paragraph{Notation} We will write tropical multiplication $a \odot b$ as $ab$\,, and repeated multiplication $a \odot a$ as $a^2$\,. We will write classical addition, subtraction, multiplication, and division as $a+b$, $a-b$, $a\cdot b$, and $\frac{a}{b}$\,, respectively.

\section{Equality and Functional Equivalence}  \label{sec:equality}

A polynomial $f(x) \in \Q[x]$ is defined to be a formal sum
$$f(x)=a_nx^n \+ a_{n-1}x^{n-1} \+ \cdots \+ a_0\,.$$
For two polynomials $f$ and $g$\,, we write $f=g$ if each pair of corresponding coefficients of $f$ and $g$ are equal.

We can also think of a tropical polynomial as a function. Two polynomials are \emph{functionally equivalent} if for each $x\in\Q$\,, $f(x)=g(x)$\,. In this case, we write $f \sim g$\,. Notice that functional equivalence does not imply equality. For example, the polynomials $x^2\+ 1x \+2$ and $x^2\+2x\+2$ are functionally equivalent, but not equal as polynomials. In general, functional equivalence is a more useful equivalence relation to use with tropical polynomials than equality of coefficients.

\begin{defn} \label{defn:least coefficient}
A coefficient $a_i$ of a polynomial $f(x)$ is a \emph{least
coefficient} if for any $b \in \Q$ with $b<a_i$\,, the polynomial $g(x)$ formed by replacing $a_i$ with $b$ is not functionally equivalent to $f(x)$\,.
\end{defn}

\begin{nte}
If $f(x)=a_nx^n\+a_{n-1}x^{n-1}\+ \cdots \+ a_rx^r$\,, where $a_n,a_r\neq \infty$\,, then $a_n$ and $a_r$ are least coefficients. Additionally, if $r<i<n$ and $a_i=\infty$\,, then $a_i$ is not a least coefficient.
\end{nte}

\begin{lem}[Alternate definition of least coefficient]\label{lem:alt least coefficient}
Let $a_ix^i$ be a term of a polynomial $f(x)$\,, with $a_i$ not equal to infinity. Then $a_i$ is a least coefficient of $f(x)$ if and only if  there is some $x_0\in\mathbb Q$ such that $f(x_0)=a_ix_0^i$\,.\end{lem}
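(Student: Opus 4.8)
The plan is to prove the contrapositive in one direction and a direct construction in the other. Write $f(x) = a_n x^n \+ \cdots \+ a_0$, and recall that $f(x_0) = \min_j (a_j + j x_0)$. For each $x_0$, the term $a_i x^i$ "achieves the minimum" at $x_0$ exactly when $a_i + i x_0 \le a_j + j x_0$ for all $j$.

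First I would handle the ($\Leftarrow$) direction: suppose there is an $x_0 \in \mathbb{Q}$ with $f(x_0) = a_i x_0^i = a_i + i x_0$. I claim $a_i$ is a least coefficient. Take any $b < a_i$ and let $g$ be obtained from $f$ by replacing $a_i$ with $b$. Then $g(x_0) = \min\big(b + i x_0,\ \min_{j \ne i}(a_j + j x_0)\big) \le b + i x_0 < a_i + i x_0 = f(x_0)$. So $g(x_0) \ne f(x_0)$, hence $g \not\sim f$, and $a_i$ is a least coefficient.

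For the ($\Rightarrow$) direction I would prove the contrapositive: if there is no $x_0$ with $f(x_0) = a_i + i x_0$, then $a_i$ is not a least coefficient. The hypothesis means that for every $x \in \mathbb{Q}$ there is some $j \ne i$ with $a_j + j x < a_i + i x$ (strict, since otherwise equality would give $f(x) = a_i x^i$). The key step — and the main obstacle — is to show that one can lower $a_i$ to some $b < a_i$ while keeping the function unchanged; concretely, that $\inf_{x \in \mathbb{Q}} \big(\min_{j \ne i}(a_j + j x) - i x\big) > -\infty$, i.e.\ the term $a_i x^i$ has a uniform "gap" above the rest of the polynomial, so any $b$ below $a_i$ but above (or at) this infimum-plus-$\varepsilon$ leaves $f$ unchanged. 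I would argue this by noting $\min_{j\ne i}(a_j + jx) - ix$ is a concave piecewise-linear function of $x$ with a finite number of pieces; since it never drops to $-\infty$ on any piece (each piece is linear) one must check it is bounded below. Here I would split on whether $i$ is an extreme exponent: if $r < i < n$, the slopes $j - i$ take both signs as $j$ ranges over the (nonempty) sets $\{j < i\}$ and $\{j > i\}$ with finite coefficients, forcing the concave function to $-\infty$ at both ends only if... — actually concavity plus the function being finite and the outer pieces having slopes of opposite sign means it attains a minimum, hence is bounded below; this bounded-below value is the largest $b$ we may use. If instead $i \in \{n, r\}$, then (by the Note) $a_i$ is automatically a least coefficient, contradicting our assumption that we are in the non-least case after having also assumed $a_i \neq \infty$; alternatively one shows directly that for $i = n$, large $x$ makes $a_n + nx$ strictly largest, contradicting the hypothesis. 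So the only genuine case is $r < i < n$, where the gap is finite and positive, completing the argument.

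Thus the crux is the boundedness-below of the piecewise-linear "gap function," which follows from concavity together with the fact that an interior exponent sits between exponents with finite coefficients on both sides; everything else is a short direct computation with $\min$.
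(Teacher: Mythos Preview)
Your $(\Leftarrow)$ direction is correct and matches the paper's argument essentially verbatim.

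In the $(\Rightarrow)$ direction you have the right overall plan, but the central inequality is written the wrong way round. Set $h(x)=\min_{j\ne i}(a_j+jx)-ix$. Under the hypothesis one has $h(x)<a_i$ for every $x$, and replacing $a_i$ by $b$ leaves the function unchanged precisely when $b+ix\ge \min_{j\ne i}(a_j+jx)$ for all $x$, i.e.\ when $b\ge\sup_x h(x)$. So what you must show is that $\sup_x h(x)$ is \emph{attained} and hence strictly less than $a_i$ --- not that $\inf_x h(x)>-\infty$. In fact your own slope analysis shows the opposite of what you claim: for an interior exponent $r<i<n$, the concave piecewise-linear function $h$ has outer pieces with slopes $r-i<0$ (as $x\to+\infty$) and $n-i>0$ (as $x\to-\infty$), so $h(x)\to-\infty$ in both directions; it is \emph{not} bounded below and does not attain a minimum. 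What concavity together with those outer slopes does yield is that $h$ attains its \emph{maximum}, and that is exactly the statement you need.

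Once this sup/inf swap is corrected, your argument is essentially the paper's. The paper works with $\varphi(x)=f(x)-(i\cdot x+a_i)$, which under the hypothesis equals $h(x)-a_i$, notes that $\varphi$ is continuous, piecewise linear with finitely many pieces, and strictly negative, and invokes the extreme value theorem on an interval containing all the breakpoints to conclude that $\sup\varphi$ is achieved, hence $\sup\varphi<0$; then any $b$ with $a_i-|\sup\varphi|<b<a_i$ witnesses that $a_i$ is not a least coefficient.
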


\begin{proof}
For all $x\in\mathbb Q$\,, note that $f(x)\leq a_ix^i$\,. Suppose that there is no $x$ such that $f(x)=a_ix^i$\,. Then $f(x)<a_ix^i$ for all $x$\,. Now, let $\varphi (x)=f(x)-(i\cdot x+a_i)$\,. Note that $\varphi$ is a piecewise-linear, continuous function that is linear over a finite number of intervals. Thus, there is an interval large enough to contain all the pieces of $\varphi$. By applying the extreme value theorem to this interval, we see that $\sup \varphi \in \varphi(\mathbb R)$\,, and hence $\sup \varphi <0$\,. Let $\epsilon=|\sup \varphi|$ and $b\in\Q$ be such that $a_i-\epsilon<b<a_i$\,. Then
$$f(x)-(i\cdot x+b)<f(x)-(i\cdot x+a_i)+\epsilon \leq0$$
and therefore $f(x)< i\cdot x+b$
for all $x\in \mathbb Q$\,. Therefore, the polynomial created by replacing $a_i$ with $b$ is functionally equivalent to $f(x)$\,, so $a_i$ is not a least coefficient.

For the other direction, suppose that  there is an $x_0\in\mathbb Q$
such that $f(x_0)=a_ix_0^i$\,. Given $b<a_i$\,, let $g(x)$ be $f(x)$ with $a_i$ replaced by $b$\,.
Then $g(x_0)\leq bx_0^i<a_ix_0^i=f(x_0)$\,, so $g$ is not functionally equivalent to $f$\,. Therefore
$a_i$ is a least coefficient.
\end{proof}

\section{Least-coefficient polynomials}
\label{sec:lcp}

\begin{defn} \label{defn:least coefficients polynomial}
A polynomial is a \emph{least-coefficient polynomial}
if all its coefficients are least coefficients.
\end{defn}

\begin{lem}[Uniqueness of least-coefficient polynomials] \label{lem:lcp equality}
Let $f$ and $g$ be least-coefficient polynomials. Then $f$ is equal to $g$ if and only if $f$ is functionally equivalent to $g$\,.
\end{lem}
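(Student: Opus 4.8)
The forward implication is immediate: equal polynomials define equal functions, so $f=g$ gives $f\sim g$. The content is the converse, and the plan is to extract from Lemma~\ref{lem:alt least coefficient} a formula that recovers each finite least coefficient from the \emph{function} $f$ alone, and then apply that formula to both $f$ and $g$.

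Here is the formula I would establish first. For any polynomial $f$ and index $i$, the term $a_ix^i$ gives $f(x)\le a_ix^i=a_i+i\cdot x$ for all $x$, hence $f(x)-i\cdot x\le a_i$ for every $x\in\mathbb Q$. If moreover $a_i\neq\infty$ and $a_i$ is a least coefficient of $f$, then Lemma~\ref{lem:alt least coefficient} supplies an $x_0\in\mathbb Q$ with $f(x_0)=a_ix_0^i=a_i+i\cdot x_0$, so that
\[
 a_i=\sup_{x\in\mathbb Q}\bigl(f(x)-i\cdot x\bigr),
\]
the supremum being attained at $x_0$. In particular a finite least coefficient depends only on the function defined by $f$.

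Now suppose $f\sim g$ with $f$ and $g$ least-coefficient polynomials; write $a_i$ and $b_i$ for their respective coefficients of $x^i$, with the convention that these are $\infty$ beyond the degree. Fix $i$ and first assume $a_i\neq\infty$. Since $f$ is a least-coefficient polynomial, $a_i$ is a least coefficient, so the formula above gives $a_i=\sup_x(f(x)-i\cdot x)<\infty$, and by $f\sim g$ this also equals $\sup_x(g(x)-i\cdot x)$. I claim $b_i\neq\infty$: by the Note, the finite coefficients of the least-coefficient polynomial $g$ occupy exactly the indices between its order and its degree, so if $b_i=\infty$ then $i$ lies outside that range, and then $g(x)-i\cdot x\to+\infty$ either as $x\to-\infty$ (when $i$ exceeds the degree of $g$) or as $x\to+\infty$ (when $i$ is below the order of $g$), contradicting $\sup_x(g(x)-i\cdot x)=a_i<\infty$. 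Hence $b_i$ is a finite least coefficient of $g$, and the formula applied to $g$ yields $b_i=\sup_x(g(x)-i\cdot x)=a_i$. By symmetry of the hypotheses in $f$ and $g$, the case $b_i\neq\infty$ is handled the same way, and the case $a_i=b_i=\infty$ is trivial. Since $i$ was arbitrary, $f=g$.

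The step I expect to be the real obstacle is the claim $b_i\neq\infty$, i.e., that $f$ and $g$ carry finite coefficients at the same indices. The functional formula for a least coefficient is blind to indices where the coefficient is $\infty$, so a term-by-term comparison only becomes meaningful once one knows the two index sets agree---equivalently, that the degree and order are functional invariants. I plan to obtain this from the Note together with the elementary fact that a tropical polynomial is eventually linear as $x\to-\infty$ (with slope its degree) and as $x\to+\infty$ (with slope its order); packaging it as ``$\sup=+\infty$'' above is just a convenient way to slot it into the coefficient comparison.
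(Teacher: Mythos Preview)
Your proof is correct. Both you and the paper hinge on Lemma~\ref{lem:alt least coefficient}, but the organization differs. The paper argues by contrapositive in two lines: if $f\neq g$, pick $i$ with $a_i\neq b_i$, say $a_i<b_i$; since $b_i$ is a least coefficient of $g$, Lemma~\ref{lem:alt least coefficient} supplies $x_0$ with $g(x_0)=b_ix_0^i$, and then $f(x_0)\le a_ix_0^i<b_ix_0^i=g(x_0)$. You instead extract from the same lemma the explicit formula $a_i=\sup_{x}\bigl(f(x)-i\cdot x\bigr)$ for each finite least coefficient and read off $a_i=b_i$ directly from $f\sim g$. Your packaging buys two things: the formula itself (a Legendre transform, making transparent that least coefficients are functional invariants), and an explicit treatment of the case where the index ranges of $f$ and $g$ might differ---the paper's contrapositive tacitly needs $b_i\neq\infty$ to invoke Lemma~\ref{lem:alt least coefficient}, which is immediate only once one knows the degrees and orders agree, whereas your asymptotic argument handles this head-on. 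The paper's version, on the other hand, is much shorter.
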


\begin{proof}
It is clear that $f=g$ implies $f\sim g$\,. For the other direction, suppose that $f\neq g$\,. Then for some term $a_ix^i$ of $f(x)$ and the corresponding term $b_ix^i$ of $g(x)$\,, we have $a_i\neq b_i$\,. Without loss of generality, suppose $a_i<b_i$\,. Since $g$ is a least-coefficient polynomial, $g(x_0)=b_ix_0^i$ for some $x_0$\,, by Lemma \ref{lem:alt least coefficient}. Now,
$$f(x_0)\leq a_ix_0^i<b_ix_0^i=g(x_0)\,,$$
so $f$ is not functionally equivalent to $g$\,.
\end{proof}

We will now prove that every functional equivalence class contains a unique least-coefficient polynomial. This least-coefficient representative is often the most useful way to represent a functional equivalence class of tropical polynomials.

\begin{lem} \label{lem:unique LCP}
Let $f(x)=a_nx^n \+ a_{n-1}x^{n-1} \+ \cdots \+ a_rx^r$\,. There is a unique least-coefficient polynomial $g(x)=b_nx^n\+b_{n-1}x^{n-1}\+\cdots\+ b_rx^r$ such that $f\sim g$\,. Furthermore, each coefficient $b_j$ of $g(x)$ is given by
\begin{equation} \label{eqn:LCP algorithm}
b_j=\min\left(\bigg\{a_j\bigg\}\cup \left\{\frac{a_i\cdot (k-j)+a_k\cdot
(j-i)}{k-i}\bigg| r\leq i<j<k \leq n\right\}\right)\,.
\end{equation}
\end{lem}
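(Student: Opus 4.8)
The plan is to treat $f$ as a function, writing $f(x)=\min_i(a_i+i\cdot x)$ in classical notation, so that $f$ is a continuous, concave, piecewise-linear function with finitely many pieces. Uniqueness is then immediate: if $g$ and $g'$ are both least-coefficient polynomials with $g\sim f$ and $g'\sim f$, then $g\sim g'$, whence $g=g'$ by Lemma~\ref{lem:lcp equality}. So the substance of the lemma is the \emph{existence} of $g$ together with the formula~\eqref{eqn:LCP algorithm}. I would build $g$ from the quantities $c_j:=\sup_{x\in\mathbb Q}(f(x)-j\cdot x)$, defined for $r\le j\le n$, and carry out three steps: (A) each $c_j$ is a rational number that is actually attained, with $c_r=a_r$ and $c_n=a_n$; (B) the polynomial $g(x)=c_nx^n\+\cdots\+c_rx^r$ is a least-coefficient polynomial functionally equivalent to $f$; and (C) $c_j$ equals the right-hand side of~\eqref{eqn:LCP algorithm}.

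For (A), fix $j$ and set $\psi_j(x)=f(x)-j\cdot x=\min_i(a_i+(i-j)x)$, which is again concave and piecewise-linear. When $r<j<n$, the $i=r$ term has negative slope $r-j$ and the $i=n$ term has positive slope $n-j$, so $\psi_j(x)\to-\infty$ both as $x\to+\infty$ and as $x\to-\infty$; by the extreme value theorem applied on a bounded interval containing all the breakpoints of $\psi_j$---just as in the proof of Lemma~\ref{lem:alt least coefficient}---$\psi_j$ attains a maximum, and since its breakpoints are rational (each is the solution of a linear equation with rational data) this maximum occurs at some $x_0\in\mathbb Q$, giving $c_j=\psi_j(x_0)\in\mathbb Q$. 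When $j=r$ (resp.\ $j=n$) one checks directly that $\psi_j$ is eventually constant, equal to its maximum value $a_r$ (resp.\ $a_n$), so $c_r=a_r$ and $c_n=a_n$ are finite.

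For (B), $f(x)\le a_j+j\cdot x$ for all $x$ gives $c_j\le a_j$, hence $g(x)=\min_j(c_j+j\cdot x)\le f(x)$; and $c_j\ge f(x)-j\cdot x$ for all $x$ gives $c_j+j\cdot x\ge f(x)$ for all $x$ and $j$, hence $g(x)\ge f(x)$. Thus $g\sim f$. By (A) each $c_j$ is attained at some rational $x_0$, so $g(x_0)=f(x_0)=c_jx_0^j$, and Lemma~\ref{lem:alt least coefficient} shows $c_j$ is a least coefficient of $g$; as this holds for all $j$, $g$ is a least-coefficient polynomial. For (C), write $b_j$ for the right-hand side of~\eqref{eqn:LCP algorithm}; I would prove $b_j\ge c_j$ and $b_j\le c_j$ separately. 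For $b_j\ge c_j$: every entry of the set over which $b_j$ is the minimum is an upper bound for $\psi_j$; this is clear for $a_j$, and for the chord value $\frac{a_i(k-j)+a_k(j-i)}{k-i}$ with $i<j<k$ one writes $j=\frac{k-j}{k-i}\,i+\frac{j-i}{k-i}\,k$ and observes that $\frac{a_i(k-j)+a_k(j-i)}{k-i}+j\cdot x$ is the corresponding convex combination of $a_i+i\cdot x$ and $a_k+k\cdot x$, each of which is at least $f(x)$. For $b_j\le c_j$: let $x_0$ maximize $\psi_j$ and put $M=\{m:a_m+m\cdot x_0=f(x_0)\}$. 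If $j\in M$ then $c_j=\psi_j(x_0)=a_j\ge b_j$. Otherwise, with $i_0=\min M$ and $k_0=\max M$, an analysis of $f$ just to the right and just to the left of $x_0$ forces $i_0<j<k_0$ by concavity, and then $f(x_0)=a_{i_0}+i_0\cdot x_0=a_{k_0}+k_0\cdot x_0$ yields, after a short computation, $\frac{a_{i_0}(k_0-j)+a_{k_0}(j-i_0)}{k_0-i_0}=f(x_0)-j\cdot x_0=c_j$, so $b_j\le c_j$. Hence $b_j=c_j$ for all $j$, and together with (B) this completes the proof.

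I expect the main obstacle to be the inequality $b_j\le c_j$ in (C)---more precisely, the claim that at a maximizer $x_0$ of $\psi_j$, if $j$ is not an active index then the smallest active index lies strictly below $j$ and the largest strictly above $j$. The mechanism is that for $x$ slightly greater than $x_0$ the value $f(x)$ is carried by the active term of smallest slope, and for $x$ slightly smaller by the active term of largest slope, so if the smallest active index were $\ge j$ (or the largest $\le j$) then $\psi_j$ would be increasing through $x_0$ on one side, contradicting maximality; one must also note that terms inactive at $x_0$ remain strictly above $f$ near $x_0$, so that the local behavior of $f$ there is genuinely controlled by the active terms. A secondary, routine nuisance throughout is rationality: one must confirm that the maximizers $x_0$, and hence the values $c_j$, can be chosen rational, which holds because every breakpoint in sight is the solution of a linear equation with rational coefficients.
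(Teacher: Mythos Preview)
Your argument is correct and takes a genuinely different route from the paper. The paper works directly with the formula \eqref{eqn:LCP algorithm}: it first verifies $f\sim g$ by a case split on whether $x_0$ lies above or below each crossing point $\frac{a_i-a_k}{k-i}$, and then shows $g$ is least-coefficient by distinguishing whether $a_j$ is already a least coefficient (in which case $b_j=a_j$) or not (in which case it locates the nearest least coefficients $a_u,a_v$ on either side of $j$ and shows, by a somewhat delicate contradiction argument, that $b_j$ equals the chord value through $(u,a_u)$ and $(v,a_v)$). Your approach instead introduces the Legendre-type quantities $c_j=\sup_x\bigl(f(x)-j\cdot x\bigr)$, from which both $g\sim f$ and the least-coefficient property fall out in a couple of lines via Lemma~\ref{lem:alt least coefficient}, and then identifies $c_j$ with the formula by analyzing the active indices at a maximizer of $\psi_j$. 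What you gain is a conceptual explanation of \emph{why} the formula holds---the $b_j$ are exactly the values of the upper concave envelope of the points $(i,a_i)$, and the chord expressions in \eqref{eqn:LCP algorithm} are just the heights of the segments of that envelope---whereas the paper's argument verifies the formula without revealing this structure. The paper's approach, on the other hand, is entirely self-contained computation and avoids the extra layer of defining $c_j$ and then matching it to $b_j$. Your handling of the one delicate point, that at a maximizer $x_0$ of $\psi_j$ with $j\notin M$ the active index set $M$ must straddle $j$, is correct: inactive terms stay strictly above $f$ in a neighborhood of $x_0$, so the one-sided slopes of $f$ at $x_0$ are $i_0$ and $k_0$, and maximality of $\psi_j$ forces $i_0\le j\le k_0$, with strictness since $j\notin M$.
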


\begin{proof}
First we will show that $f\sim g$\,.
Given $x_0$\,, note that $f(x_0)=a_sx_0^s=a_s+s\cdot x_0$ for some $s$\,. Also,
\begin{eqnarray}
g(x_0)&=&\min_{r\leq j\leq n} \left\{b_j+j\cdot x_0 \right\}\nonumber \\
 \label{eqn:algorithm value}
&=&\min_{r\leq i<j<k \leq n}
  \left\{a_j + j \cdot x_0,\frac{a_i\cdot (k-j)+a_k\cdot (j-i)}{k-i}+j\cdot x_0\right\}
\,.
\end{eqnarray}
\renewcommand{\x}{ \frac{a_i-a_k}{k-i}}
\renewcommand{\y}{\frac{a_i\cdot(k-j)+a_k\cdot(j-i)}{k-i}}
So for any $i, j, \text{and } k$ such that $r\leq i<j<k\leq n$\,, if $x_0\geq \x$ then
\begin{eqnarray*}
a_s+s\cdot x_0 &\leq& a_i+i \cdot x_0\\
&=&             \y+(j-i)\cdot \left(\x\right)+i \cdot x_0 \\
&\leq&         \y+(j-i)\cdot x_0 + i\cdot x_0 \\
&=&             \y+j\cdot x_0\,.
\end{eqnarray*}
A similar argument shows that  if $x_0 \leq \x$\,, then
\begin{equation*}
a_s+s\cdot x_0 \leq a_k+k\cdot x_0
\leq \y+j \cdot x_0\,.
\end{equation*}
Since this is true for all $i$, $j$, and $k$, the equation in (\ref{eqn:algorithm value}) evaluates to $g(x_0)=a_sx_0^s$, so $g(x_0)=f(x_0)$ and $f\sim g$\,, as desired.

Secondly, we must show $g$ is a least-coefficient polynomial. Given a coefficient $b_j$ in $g$\,, suppose that $a_j$ is a least coefficient of $f$\,.  From
Equation (\ref{eqn:LCP algorithm})
we see that $b_j\leq a_j$\,. Since $a_j$ is a least coefficient, there is some $x_0$ such that $f(x_0)=a_jx_0^j$\,, so $b_jx_0^j \geq g(x_0)=f(x_0)=a_jx_0^j$\,. Therefore $b_j=a_j$ and $g(x_0)=b_jx_0^j$\,.

Now suppose that $a_j$ is not a least coefficient.  Then since $a_r$ and $a_n$ are least-coefficient, we can choose $u<j$ and $v>j$ such that $a_u$ and $a_v$ are least coefficients and for any $t$ such that $u<t<v$\,, $a_t$ is not a least coefficient.

\renewcommand{\x}{\frac{a_u-a_v}{v-u}}
\renewcommand{\y}{\frac{a_u\cdot(v-j)+a_v \cdot (j-u)}{v-u}}

Let $x_0=\x$ and suppose, by way of contradiction, that $f(x_0)\neq a_ux^u_0$\,.  Then $f(x_0)=a_wx_0^w<a_ux_0^u$ for some $w$\,. Note that $a_w$ is a least coefficient, so it cannot be that $u<w<v$ by our assumption on $u$ and $v$\,. If $w<u$ then for $x\geq x_0$\,,
\begin{eqnarray*}
a_w+w\cdot x &=&a_w+u\cdot x - (u-w) \cdot x \\
&\leq& a_w+u\cdot x - (u-w) \cdot x_0 \\
&=&a_w+w\cdot x_0 + u \cdot (x-x_0) \\
&<&a_u + u \cdot x_0 + u \cdot (x-x_0)\\
&=&a_u+u\cdot x
\end{eqnarray*}
For $x<x_0$\,,
\begin{eqnarray*}
a_v+v\cdot x &=&a_v+u\cdot x + (v-u)\cdot x\\
&<&a_v+u \cdot x +(v-u) \cdot x_0 \\
&=&a_v+v\cdot x_0+u\cdot (x-x_0)\\
&=&a_u+u\cdot x_0 + u\cdot (x-x_0)\\
&=&a_u+u\cdot x
\end{eqnarray*}
So there is no $x$ such that $f(x)=a_ux^u$ and thus $a_u$ is not a least coefficient, which contradicts our assumption. If $w>v$\,, a similar argument shows that $a_v$ is not a least coefficient, again contradicting our assumption. Therefore,
\begin{eqnarray}
f(x_0)&=&a_u+u\cdot\left(\x\right) \nonumber\\
&=&\y + j \cdot \left(\x\right) \nonumber\\
\label{eqn:c}&=&c+j\cdot x_0\,\text{, where } c=\y
\end{eqnarray}
Again, from (\ref{eqn:LCP algorithm}) we see that $b_j \leq c$\,, and from (\ref{eqn:c}) we see $cx_0^j=f(x_0)=g(x_0)\leq b_jx_0^j$\,. So $b_j=c$ and $g(x_0)=b_jx_0^j$\,.

Finally, $g$ is the only such polynomial by Lemma \ref{lem:lcp equality}.
\end{proof}

\begin{nte}
The use of a least-coefficient polynomial as a best representative for a functional equivalence class is one of the key ideas of this paper. We cannot develop well-defined algebraic transformations of tropical polynomials without unique representatives for functional equivalence classes. While Izhakian discusses in~\cite{izhakian} what he calls an ``effective'' coefficient (similar to a least coefficient), the idea of using least-coefficient polynomials to represent functional equivalence classes has not been discussed.
\end{nte}

\begin{lem} \label{lem:LCP Criteria}
Let $f(x)=a_nx^n \+ a_{n-1}x^{n-1} \+ \cdots \+ a_rx^r$\,, where each $a_i$ is not infinity. Let $d_i=a_{i-1}-a_i$ be the difference between two consecutive coefficients. Then $f(x)$ is a least-coefficient polynomial if and only if the
difference between consecutive coefficients is non-decreasing, that is, if
$d_n\leq d_{n-1} \leq \cdots \leq d_{r+1}\,.$
\end{lem}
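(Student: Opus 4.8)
The plan is to read least-coefficient-ness directly off Lemma~\ref{lem:alt least coefficient}: $f$ is a least-coefficient polynomial exactly when, for every index $j$ with $r\le j\le n$, there is some $x_0\in\mathbb Q$ with $f(x_0)=a_j+j\cdot x_0$. Throughout it is convenient to write $s_m=a_{m+1}-a_m=-d_{m+1}$ for $r\le m\le n-1$, so that the asserted condition $d_n\le d_{n-1}\le\cdots\le d_{r+1}$ is precisely $s_r\le s_{r+1}\le\cdots\le s_{n-1}$; in other words, the claim is that $f$ is a least-coefficient polynomial if and only if the finite sequence $a_r,a_{r+1},\dots,a_n$ is (discretely) convex.

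For the forward implication, fix an interior index $j$, i.e.\ $r<j<n$. Since $a_j$ is then a least coefficient, choose $x_0$ with $f(x_0)=a_j+j\cdot x_0$ and compare the $j$th term of $f$ against its two neighbours only: from $a_j+j\cdot x_0\le a_{j-1}+(j-1)\cdot x_0$ and $a_j+j\cdot x_0\le a_{j+1}+(j+1)\cdot x_0$ one gets immediately $d_{j+1}\le x_0\le d_j$, hence $d_{j+1}\le d_j$. Letting $j$ run over $r+1,\dots,n-1$ yields the whole chain $d_n\le\cdots\le d_{r+1}$ (and there is nothing to prove when $n\le r+1$). This half is short.

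For the converse, assume $s_r\le\cdots\le s_{n-1}$, and for each fixed $j$ produce a single $x_0$ witnessing that $a_j$ is a least coefficient. The hypothesis makes the interval $[\,d_{j+1},d_j\,]=[-s_j,-s_{j-1}]$ nonempty; pick $x_0$ there, with the one-sided modifications $x_0\le d_n$ when $j=n$ and $x_0\ge d_{r+1}$ when $j=r$. To confirm $f(x_0)=a_j+j\cdot x_0$ one must check $\bigl(a_i+i\cdot x_0\bigr)-\bigl(a_j+j\cdot x_0\bigr)\ge 0$ for every $i$. Writing this difference as $\sum_{m=j}^{i-1}(s_m+x_0)$ when $i>j$ and as $-\sum_{m=i}^{j-1}(s_m+x_0)$ when $i<j$, the bound $x_0\ge -s_j$ makes each term of the first sum nonnegative (because $s_m\ge s_j$ for $m\ge j$), while $x_0\le -s_{j-1}$ makes each term of the second sum nonpositive (because $s_m\le s_{j-1}$ for $m\le j-1$). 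Hence every coefficient of $f$ is a least coefficient, and $f$ is a least-coefficient polynomial.

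The only real computation is this telescoping verification in the converse, together with keeping the boundary cases $j=r$ and $j=n$ straight; the rest is bookkeeping, and I expect no serious obstacle. An alternative that avoids the telescoping is to invoke Lemma~\ref{lem:unique LCP}: $f$ is a least-coefficient polynomial iff $b_j=a_j$ for all $j$, which by~(\ref{eqn:LCP algorithm}) says $a_j\le\frac{a_i\cdot(k-j)+a_k\cdot(j-i)}{k-i}$ for all $r\le i<j<k\le n$; the special case $i=j-1$, $k=j+1$ is exactly $d_{j+1}\le d_j$, and the remaining inequalities follow from these consecutive ones by the usual discrete-convexity argument. Either way, the crux is the elementary principle that ``each interior coefficient lying on or below the chord through its immediate neighbours'' propagates to ``each interior coefficient lying on or below every chord.''
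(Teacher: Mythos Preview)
Your proof is correct. Both directions go through as written; the telescoping verification in the converse is clean, and the boundary cases $j=r,n$ are handled (and in any event $a_r,a_n$ are automatically least coefficients). One tiny point you leave implicit: the witness $x_0$ must be rational for Lemma~\ref{lem:alt least coefficient}, but since the endpoints $d_{j+1},d_j$ are rational this is no issue.

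Compared with the paper's argument, your converse is essentially the same idea---choose $x_0$ in $[d_{j+1},d_j]$ and propagate the consecutive inequalities outward---though your telescoping with $s_m$ packages it more transparently than the paper's step-by-step chain, and you allow any $x_0$ in the interval rather than the specific midpoint $\tfrac{1}{2}(a_{i-1}-a_{i+1})$. Your forward direction, however, is genuinely shorter: the paper argues the contrapositive, showing directly that if $d_{i+1}>d_i$ somewhere then $a_ix^i$ is strictly beaten everywhere (splitting on $x_0\lessgtr\tfrac{1}{2}(c-a)$), whereas you simply take the witness $x_0$ guaranteed by Lemma~\ref{lem:alt least coefficient} and read off $d_{j+1}\le x_0\le d_j$ from the two neighbouring comparisons. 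This buys you a two-line argument in place of a case split. The alternative route via Lemma~\ref{lem:unique LCP} that you sketch at the end is also valid and is not the path the paper takes.
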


\begin{proof}
Suppose that $f$ has a set of consecutive coefficients whose differences are \emph{decreasing}, that is, $ax^{i+1}$, $bx^{i}$, and $cx^{i-1}$ are consecutive terms of $f(x)$ such that $b-a>c-b$\,. Then $b>\half \cdot (a+c)$\,. We will show that $f(x_0)<bx_0^i$ for all $x_0$\,, meaning that $b$ is not a least coefficient.

Given $x_0$\,, if $x_0\leq\half \cdot (c-a)$ then
\begin{eqnarray*}
ax_0^{i+1}&=&(i+1)\cdot x_0 + a\\
 &\leq& i \cdot x_0 + \half \cdot (c-a) + a\\
&=&i \cdot x_0 + \half \cdot (c+a)\\
&<& i \cdot x_0+b=bx_0^i\,,
\end{eqnarray*}
so $f(x_0)\leq ax_0^{i+1}<bx_0^i$\,.
Similarly, if $x_0\geq\half \cdot (c-a)$\,,
\begin{eqnarray*}
cx_0^{i-1}&=&(i-1)\cdot x_0 + c\\
& \leq& i \cdot x_0 - \half\cdot (c-a) +c\\
&=& i \cdot x_0 + \half \cdot (c+a)\\
&<&i\cdot x_0+b=bx_0^i\,,
\end{eqnarray*}
so $f(x_0)\leq cx_0^{i-1}<bx_0^i$\,. Therefore $b$ is not a least coefficient, and $f$ is not a least-coefficient polynomial.

For the other direction, suppose that the differences between the coefficients of $f(x)$ are
nondecreasing. Since $a_n,a_r\neq\infty$\,, $a_n$ and $a_r$ are least coefficients. Let $a_i$ be a coefficient of $f$\,, with $r<i<n$\,, and let $x_0=\frac{a_{i-1}-a_{i+1}}{2}$\,. We will show that $f(x_0)=a_ix_0^i$\,, so $a_i$ is a least coefficient. We must show for all $k$ that
$i\cdot x_0+a_i\leq k\cdot x_0+a_k$\,.

This is certainly true for $i=k$\,. Suppose $k>i$\,. Then, since $(a_t-a_{t+1})\leq (a_s-a_{s+1})$ for $t\geq s$\,, we have
\begin{eqnarray*}
(a_i-a_{i+2})=(a_i-a_{i+1})+(a_{i+1}-a_{i+2})&\leq&2\cdot (a_i-a_{i+1})\\
(a_i-a_{i+3})=(a_i-a_{i+2})+(a_{i+2}-a_{i+3})&\leq&3\cdot
(a_i-a_{i+1})
\end{eqnarray*}
And in general we get
\begin{eqnarray*}
(a_i-a_k)&\leq&(a_{i}-a_{i+1})\cdot (k-i)=\frac{1}{2}\cdot \big(2\cdot (a_i-a_{i+1})\big)\cdot (k-i)\\
&\leq&\frac{1}{2}\cdot \big(
(a_{i-1}-a_i)+(a_i-a_{i+1})\big)\cdot (k-i)=x_0\cdot (k-i)\,.
\end{eqnarray*}

Thus, $i\cdot x_i+a_i\leq k\cdot x_i+a_k$\,. A similar argument
holds for $k<i$\,. So (tropically) $a_ix^i_i \leq a_sx^s_i$ for all
$s$\,. This means that $f(x_i)=a_ix^i_i$, so $a_i$ is a least coefficient. Therefore, $f$ is a least-coefficients polynomial.
\end{proof}

\begin{nte}
If $f(x)$ has a coefficient $a_i$ such that $a_i=\infty$ for $r<i<n$\,, then $f$ is not a least-coefficient polynomial; but of course, $a_i=\infty$ for all $i>n$ and all $i<r$\,, even in a least-coefficient polynomial.
\end{nte}

\section{The Fundamental Theorem of Tropical Algebra}
\label{sec:fta}

\begin{fta} 
Let $f(x)=a_nx^n\+
a_{n-1}x^{n-1}\+ \cdots \+ a_rx^r$ be a least
coefficients polynomial. Then $f(x)$ can be written uniquely as the product
of linear factors
\begin{equation}\label{eqn:factored polynomial}
a_nx^r\left(x\+ d_n\right)
\left(x\+ d_{n-1}\right)\cdots \left(x\+ d_{r+1}\right)\,,
\end{equation}
where $d_i=a_{i-1}- a_i$\,. In other words, the roots of $f(x)$
are the differences between consecutive
coefficients.
\end{fta}

\begin{proof}
Since $f(x)$ is a least-coefficient polynomial, the differences between consecutive coefficients is non-decreasing, i.e.,
$d_n\leq d_{n-1}\leq \cdots \leq d_{r+1}$\,.
Knowing these inequalities, we can expand (\ref{eqn:factored polynomial}) to get
\begin{equation}\label{eqn:expanded polynomial}
a_nx^n\+ a_nd_nx^{n-1}\+ a_nd_nd_{n-1}x^{n-2}\+
\cdots \+ a_nd_nd_{n-1}\cdots d_{r+1}\,.
\end{equation}
But the coefficient of the $x^{i}$ term in this polynomial is
\begin{equation*}
a_nd_nd_{n-1}\cdots d_{i+1}=a_n+d_n+d_{n-1}+ \cdots + d_{i+1}\,.
\end{equation*}
A straightforward computation shows that this is equal to $a_i$, so the polynomial in (\ref{eqn:expanded polynomial}) is equal to $f(x)$, as desired.

Now suppose that there is another way of writing $f(x)$ as a product of linear factors. Call this product $g$ and note that it must have the same degree as $f$\,. Additionally, the smallest non-infinite term of $g$ must have the same degree as the smallest non-infinite term of $f$\,. Hence, we are able to write
$g(x)=a'_nx^r\left(x\+ d'_n\right)
\left(x\+ d'_{n-1}\right)\cdots \left(x\+ d'_{r+1}\right)$\,,
with each $d'_i$ chosen, after reindexing, if necessary, such that $d'_n\leq d'_{n-1}\leq \cdots\leq d'_{r+1}$\,. Expanding this product shows that the differences between consecutive coefficients of $g$ are non-decreasing, so $g$ is a least-coefficient polynomial by Lemma \ref{lem:LCP Criteria}. We see from (\ref{eqn:expanded polynomial}) that $f\neq g$\,, so by Lemma \ref{lem:lcp equality}, $f$ is not functionally equivalent to $g$\,. Therefore, the factorization is unique.
\end{proof}

Finally we note that tropical factoring gives us a slightly different result than classical factoring. Classically, the set of roots (or zero locus) of a polynomial is the set of points at which the polynomial evaluates to the additive identity.  Unfortunately, tropical polynomials have either no roots or trivial roots in this sense.  In fact, if $f(x)\neq \infty$\,, then $f(x_0)$ never evaluates to the additive identity $\infty$ when $x_0\neq \infty$\,.  However, as we have seen, polynomials in $\Q[x]$ can be factored and seem to have ``roots,'' although they do not evaluate to the additive identity at these points.  Clearly, we must use a different, more meaningful definition. In~\cite{richter:first} motivation is given for the following definition of zero locus.

\begin{defn}
Let $f(x)\in \mathcal{Q}[x]$.  The \emph{tropical zero locus} (or \emph{corner locus}) $\mathcal{Z}(f)$ is the set of points $x_0$ in $\mathcal{Q}$ for which at least two monomials of $f$ attain the minimum value.
\end{defn}

The $d_i$ in (\ref{eqn:factored polynomial}) are precisely that points of $\mathcal Z (f)$, as we now show.

\begin{thm} \label{thm:zero locus}
Given a point $d\in \Q$ and a least-coefficient polynomial $f(x)$\,, $x\+d$ is a factor of $f(x)$ if and only if $f(d)$ attains its minimum on at least two monomials.
\end{thm}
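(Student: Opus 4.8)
The plan is to reduce both sides of the biconditional to the single condition ``$d = d_i$ for some $i$ with $r+1 \le i \le n$,'' where $d_i = a_{i-1}-a_i$, and then quote the Fundamental Theorem of Tropical Algebra (the version just proved in this section) on one side and do a short direct analysis of the piecewise-linear function $f$ on the other.

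First I would handle the factorization side. Since $f$ is a least-coefficient polynomial, the Fundamental Theorem gives the \emph{unique} factorization $f(x)=a_nx^r(x\oplus d_n)\cdots(x\oplus d_{r+1})$ with $d_n\le d_{n-1}\le\cdots\le d_{r+1}$. If $d=d_i$ for some $i$, then $x\oplus d$ is visibly one of these factors. Conversely, if $x\oplus d$ is a factor, write $f\sim(x\oplus d)\odot g$; replacing $g$ by its least-coefficient representative and factoring it via the Fundamental Theorem exhibits $f$ (up to functional equivalence) as a product of $x\oplus d$, finitely many linear factors $x\oplus e_j$, and finitely many copies of the monomial $x$. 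Comparing this with the unique factorization of $f$ — using Lemma~\ref{lem:lcp equality} and the uniqueness clause of the Fundamental Theorem, and keeping careful track of the degree of the lowest-order term so that the $x$-factors are accounted for — forces $d$ to equal one of $d_{r+1},\dots,d_n$ when $d$ is finite. The case $d=\infty$, in which $x\oplus d=x$, reduces to the observation that $x$ is a factor of $f$ exactly when $r\ge 1$, which must be checked separately but is immediate.

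Second I would show that $f(d)$ attains its minimum on at least two monomials precisely when $d\in\{d_{r+1},\dots,d_n\}$; that is, that the corner locus $\mathcal{Z}(f)$ equals $\{d_{r+1},\dots,d_n\}$. The key elementary fact is that, at a point $x_0$, the monomial $a_ix^i$ attains the minimum of $f$ exactly when $d_{i+1}\le x_0\le d_i$ (with the boundary conventions $d_{n+1}=-\infty$ and $d_r=+\infty$): at $x_0$ the inequality $a_ix^i\le a_{i-1}x^{i-1}$ is equivalent to $x_0\le d_i$ and $a_ix^i\le a_{i+1}x^{i+1}$ is equivalent to $x_0\ge d_{i+1}$, and the non-decreasing property $d_n\le\cdots\le d_{r+1}$ lets one chain these two pairwise comparisons transitively to all remaining monomials. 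From this characterization, two distinct monomials can be simultaneously minimal at $x_0$ only if a consecutive block $d_{i+1}=d_{i+2}=\cdots=d_j$ collapses to the common value $x_0$, while conversely at $x_0=d_\ell$ both $a_\ell x^\ell$ and $a_{\ell-1}x^{\ell-1}$ are minimal; hence $\mathcal{Z}(f)=\{d_{r+1},\dots,d_n\}$. Combining the two steps gives the theorem.

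I expect the main obstacle to be the ``only if'' direction of the first step: extracting ``$d$ is one of the $d_i$'' from the bare existence of a factorization having $x\oplus d$ as a factor. This is where one must pass from an arbitrary cofactor to its least-coefficient representative, bookkeep the degrees (so that monomial factors $x=x\oplus\infty$ arising from the cofactor do not interfere with the count of finite roots), invoke uniqueness, and separately treat $d=\infty$. The second step, by contrast, is a short self-contained computation once the characterization ``$a_ix^i$ is minimal at $x_0\iff d_{i+1}\le x_0\le d_i$'' is established.
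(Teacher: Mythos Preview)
Your proposal is correct and shares the paper's overall strategy --- reduce both conditions to ``$d\in\{d_{r+1},\dots,d_n\}$'' --- but the two arguments differ in how each half is handled. On the factorization side, the paper simply \emph{takes} ``$x\oplus d$ is a factor of $f$'' to mean that $d$ already appears among the $d_i$ in the unique factorization of~(\ref{eqn:factored polynomial}), so there is nothing to prove; you instead read ``factor'' as divisor up to functional equivalence, pass to the least-coefficient representative of the cofactor, and invoke the uniqueness clause of the Fundamental Theorem to force $d$ into $\{d_i\}$, also treating the boundary case $d=\infty$ that the paper does not address. On the corner-locus side, the paper argues by contradiction that two simultaneously minimal monomials must be \emph{consecutive} (otherwise an intermediate $a_i$ would fail to be a least coefficient, violating the hypothesis), and then reads off $d=a_{i-1}-a_i$; you instead establish the direct characterization ``$a_ix^i$ is minimal at $x_0$ iff $d_{i+1}\le x_0\le d_i$'' and obtain $\mathcal Z(f)=\{d_{r+1},\dots,d_n\}$ from it. Your route is more careful about what ``factor'' means and slightly more structural on the corner-locus side; the paper's is shorter because it offloads the first step onto the narrow reading of ``factor.''
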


\begin{proof}
First, suppose that $x\+d$ is a factor of $f(x)$\,. If we write $f$ as a product of linear factors as in (\ref{eqn:factored polynomial}), $d_i=d$ for some $d_i$\,.For all $j<i$\,,
\begin{eqnarray*}
a_i+i\cdot d_i&=&a_i+(i-j)\cdot d_i+ j\cdot d_i\\
&\leq&a_i+\underbrace{d_i+d_{i-1} + \cdots + d_{j+1}}_{i-j\text{ terms}} +j\cdot d_i\\
&=&a_j+j\cdot d_i\,.
\end{eqnarray*}
A similar calculation shows that for $j>i$\,, we have $a_i+i\cdot d_i\leq a_j+j\cdot d_i$\,. So $f(d_i)=a_id_i^i$\,.
Also, $a_id_i^i=a_nd_nd_{n-1}\cdots d_{i+1}d_i^{i} = a_nd_nd_{n-1}\cdots d_{i} d_i^{i-1}=a_{i-1}d_i^{i-1}$\,,
so the minimum is attained by at least two monomials of $f(x)$ at $x=d$\,.

For the other direction, suppose that the minimum is attained by two monomials at $f(d)$\,. By way of contradiction, suppose that these monomials are not consecutive. Then for some $j<i<k$\,, we have $a_jd^j=a_kd^k<a_id^i$\,. If $x\leq d$ then
\begin{eqnarray*}
a_k+k\cdot x &=&a_k+i\cdot x+(k-i)\cdot x \leq a_k+i\cdot x + (k-i)\cdot d\\
&=&a_k+k\cdot d+i\cdot (x-d)<a_i+i\cdot d + i\cdot (x-d)\\
&=&a_i+i\cdot x
\end{eqnarray*}
Similarly, if $x\geq d$\,, then $a_jx^j<a_ix^i$\,. Thus there is no $x$ such that $f(x)=a_ix^i$\,, so $a_i$ is not a least coefficient, which is a contradiction.  Therefore there is some $i$ such that $a_id^i=a_{i-1}d^{i-1}$\,. Thus we have
\begin{equation*}
0=a_{i-1}+(i-1)\cdot d-(a_i+i\cdot d)=a_{i-1}-a_i-d\,.
\end{equation*}
So $d=a_{i-1}-a_i$\,, the difference between two consecutive coefficients. Since $f$ is a least-coefficient polynomial, $x\+d$ is a factor of $f$ by the Fundamental Theorem\,.
\end{proof}

Thus, as in the classical case, the unique factorization of a polynomial in $\mathcal{Q}[x]$ gives us what could be considered the \emph{roots} of the polynomial.  It is clear that all of the arguments and results of this paper hold if we replace the rationals $\mathbb Q$ with any ordered field. Thus any ordered field, together with $\infty$\,, can be said to be tropically algebraically closed.

\nocite{thesis}
\phantomsection
\addcontentsline{toc}{section}{References}
\bibliographystyle{amsplain} 
\bibliography{fta}

\end{document}